\newtheorem{theorem}{Theorem}[section]
\newtheorem{observation}[theorem]{Observation}
\newtheorem{Example}[theorem]{Example}
\newtheorem{definition}[theorem]{Definition}
\newtheorem{question}[theorem]{Question}
\def\({\left(}
\def\){\right)}
\def\[({\left[}
\def\]{\right]}
\def\diam{{\rm diam}}
\def\rn{{\rm rn}}
\def\span{{\rm span}}
\def\dis{\displaystyle}
\def\ve{\varepsilon}
\newcommand{\be}{\begin{equation}}
\newcommand{\ee}{\end{equation}}
\newcommand{\ben}{\begin{equation*}}
\newcommand{\een}{\end{equation*}}
\newcommand{\bea}{\begin{eqnarray}}
\newcommand{\eea}{\end{eqnarray}}
\newcommand{\bean}{\begin{eqnarray*}}
\newcommand{\eean}{\end{eqnarray*}}
\begin{document}
\thispagestyle{empty}
\setcounter{page}{01}
\begin{center}
\textbf{Some techniques to find large lower bound trees for the radio number}
\end{center}
\markboth{\it South East Asian J. of Mathematics and Mathematical Sciences}{\it Some techniques to find large lower bound trees for the radio number}
\begin{center}
{\bf Devsi Bantva}\footnote{Corresponding author.}\vspace{.2cm}\\
Lukhdhirji Engineering College - Morvi - 363 642, Gujarat (India)\vspace{.2cm}\\
E-mail : devsi.bantva@gmail.com \vspace{0.6cm}\\
{\bf P. L. Vihol}\vspace{.2cm}\\
Government Engineering College - Gandhinagar - 382 028, Gujarat (India)\vspace{.2cm}\\
E-mail : viholprakash@yahoo.com \vspace{0.6cm}\\
\end{center}\vspace{.1cm}

{\bf Abstract:} For a simple finite connected graph $G$, let $\diam(G)$ and $d_{G}(u,v)$ denote the diameter of $G$ and distance between $u$ and $v$ in $G$, respectively. A radio labeling of a graph $G$ is a mapping $f$ : $V(G)$ $\rightarrow$ \{0, 1, 2,...\} such that $|f(u)-f(v)|\geq \diam(G) + 1 - d_{G}(u,v)$ holds for every pair of distinct vertices $u,v$ of $G$. The radio number $\rn(G)$ of $G$ is the smallest number $k$ such that $G$ has radio labeling $f$ with max$\{f(v):v \in V(G)\}$ = $k$. Bantva {\it et al.} gave a lower bound for the radio number of trees in \cite[Lemma 3.1]{Bantva2} and, a necessary and sufficient condition to achieve this lower bound in \cite[Theorem 3.2]{Bantva2}. Denote the lower bound for the radio number of trees given in \cite[Lemma 3.1]{Bantva2} by $lb(T)$. A tree $T$ is called a lower bound tree for the radio number if $\rn(T)$ = $lb(T)$. In this paper, we construct some large lower bound trees for the radio number using known lower bound trees. \vspace{0.2cm}\\
{\bf Keywords and Phrases:} Interference, channel assignment, radio labeling, radio number, tree.  \vspace{0.2cm}\\
{\bf 2020 Mathematics Subject Classification:} 05C78, 05C15, 05C12. \vspace{0.2cm}\\

\section{Introduction}
The channel assignment problem is the task of allocating channels (non-negative integers) to each TV or radio transmitter such that the interference constraints are satisfied and channels use is kept to a minimum. In 1980, Hale looked for a mathematical solution of this channel assignment problem using a graph model in \cite{Hale}. In a graph model, a set of vertices represents transmitters; two vertices are adjacent if the corresponding transmitters are very close and distance two apart if corresponding transmitters are close. In 1988, Roberts in a private communication with Griggs proposed that close transmitters must receive different channels and very close transmitters must receive channels that are at least two apart. Motivated by this, Griggs and Yeh \cite{Griggs} introduced the following $L(2,1)$-labeling problem: An {\it $L(2,1)$-labeling} of a graph $G=(V(G),E(G))$ is a function $f$ from the vertex set $V(G)$ to the set of non-negative integers such that $|f(u)-f(v)|\geq2$ if $d(u,v)=1$ and $|f(u)-f(v)|\geq1$ if $d(u,v)=2$. The {\it span} of $f$ is defined as $\max\{|f(u)-f(v)| : u, v \in V(G)\}$, and the minimum span taken over all $L(2,1)$-labelings of $G$ is called the {\it $\lambda$-number} of $G$, denoted by $\lambda(G)$. This graph labeling is also known as distance two labeling as the conditions imposed on vertices are within two distance. The readers are requested to refer \cite{Calamoneri} and \cite{Yeh1} for different directions of work and recent results on $L(2,1)$-labeling.

The distance conditions on vertices were later extended to include pairs of vertices at distances larger than two to the maximum possible distance in graph - the diameter of a graph. Denote by $\diam(G)$ the {\it diameter} of $G$, that is, the maximum distance among all pairs of vertices in $G$. Chartrand {\it et al.} \cite{Chartrand1} introduced the concept of radio labeling as follows.

\begin{definition}
A radio labeling of a graph $G$ is a mapping $f: V(G) \rightarrow \{0,1,2,\ldots\}$ such that for every pair of distinct vertices $u, v$ of $G$,
\begin{equation}\label{def:rn}
d_{G}(u,v) + |f(u)-f(v)| \geq \diam(G) + 1.
\end{equation}
The span of $f$ is defined as $\span(f) = \max \{f(v) : v \in V(G)\}$. The radio number of $G$ is defined as
$$
\rn(G) := \min\{\span(f) : f \mbox{ is a radio labeling of }G\}
$$
with minimum taken over all radio labelings $f$ of $G$. A radio labeling $f$ of $G$ is optimal if $\span(f) = \rn(G)$.
\end{definition}

A radio labeling problem is a min-max type optimization problem. Note that any optimal radio labeling must assign $0$ to some vertex and also in the case $\diam(G) = 2$, we have $\rn(G) = \lambda(G)$. Observe that any radio labeling should assign different labels to distinct vertices. In fact, a radio labeling induces a linear order $u_{0},u_{1},\ldots,u_{n-1}$ ($n$ = $|V(G)|$) of vertices of $G$ such that $0 = f(u_{0}) < f(u_{1}) < ... < f(u_{n-1}) = \span(f)$.

A radio labeling is considered in one of the most tough graph labeling problems. The radio number of graphs is known for very few graph families. The summary of results on the radio number of graphs can be found in a survey article \cite{Chartrand}. Recently, the radio number of trees remain the focus of many researchers. The upper bound for the radio number of paths was given by Chartrand {\it et al.} in \cite{Chartrand2} while the exact radio number for paths is determined by Liu and Zhu in \cite{Liu}. In \cite{Daphne1}, Liu gave a lower bound for the radio number of trees and presented a class of trees namely spiders achieving this lower bound. In \cite{Li}, Li {\it et al.} determined the radio number of complete $m$-ary trees. In \cite{Tuza}, Hal\'asz and Tuza determined the radio number of level-wise regular trees. In \cite{Bantva2}, Bantva {\it et al.} gave a lower bound for the radio number of trees which is same as the one given by Liu in \cite{Daphne1} but using slightly different notations. They gave a necessary and sufficient condition to achieve this lower bound and presented three classes of trees namely banana trees, firecrackers trees and a special class of trees achieving this lower bound. Recently, Chavez {\it et al.} also discussed the radio number of trees and gave some methods to find large lower bound trees in \cite{Chavez}.

Denote the lower bound for the radio number of trees given in \cite[Lemma 3.1]{Bantva2} by $lb(T)$. A tree $T$ is called lower bound tree for the radio number if $\rn(T) = lb(T)$. In this paper, our purpose is to give some large lower bound trees for the radio number using known lower bound trees. We  construct three families of trees, namely $T_{w_{k}}$, $T_{S_{k}}$ and $T_{D_{k}}$(see Section \ref{Main:sec} for definition and detail) obtained by taking graph operations on a given tree $T$ or a family of trees $T_{i}, 1 \leq i \leq k$ whose radio number is equal to the lower bound given in \cite[Lemma 3.1]{Bantva2}. The radio numbers are related with following relations:
\begin{enumerate}[\rm (1)]
  \item $\rn(T_{w_{k}}) = \displaystyle\sum_{i=1}^{k} (\rn(T_{i})+(n_{i}-1)(d-d_{i}))-k+1$.
  \item $\rn(T_{S_{k}}) = k(\rn(T)+n_{0}(d-d_{0}-2)+d_{0})+1$.
  \item $\rn(T_{D_{k}}) = 2k(\rn(T)+n_{0}(d-d_{0}-3)+d_{0})+d$.
\end{enumerate}
where $\diam(T_{x})$ = $d$,  $|T|$ = $n_{0}$, $\diam(T)$ = $d_{0}$, $|T_{i}|$ = $n_{i}$ and $\diam(T_{i})$ = $d_{i}$ for $1 \leq i \leq k$ and $x \in \{w_{k},S_{k},D_{k}\}$.

\section{Preliminaries}\label{Preli:sec}

In this section, we define necessary terms and also present some known results which will be used in the present work. We follow \cite{West} for standard graph theoretic terms and notations. The {\it distance} $d_{G}(u,v)$ between two vertices $u$ and $v$ is the length of a shortest path joining $u$ and $v$ in $G$. The {\it diameter} of a graph $G$ is max\{$d_{G}(u,v)$ : $u, v \in V(G)$\}. A {\it tree} $T$ is a connected graph that contains no cycle. For a tree $T$, denote {\it vertex set} and {\it edge set} by $V(T)$ and $E(T)$. A $k$-star $S_k$ is a tree consisting of $k$-leaves and another vertex joined to all leaves by edges. A path $P_m$ on $m$ vertices is a tree in which every vertex has degree at most two. The complete $m$-ary tree of height $h$, denoted by $T_{h,m}$, is a rooted tree such that each vertex other than leaves (degree-one vertices) has $m$ children and all leaves are distance $h$ apart from the root. The level-wise regular tree is a tree rooted at one vertex $w$ or two adjacent vertices $w$ and $w'$, in which all vertices with the minimum distance $i$ from $w$ or $w'$ have the same degree $m_i$ for $0 \leq i \leq h$, where $h$ is the height of $T$. Denote these trees by $T^1 = T^1_{m_0,m_1,\ldots,m_{h-1}}$ with one root and $T^2 = T^2_{m_0,m_1,\ldots,m_{h-1}}$ with two roots, respectively. The $(m,k)$-banana tree, denoted by $B(m,k)$, is a tree obtained by joining one leaf of each of $m$ copies of a $(k-1)$-star to a single root (which is distinct from all vertices in the $k$-stars). The $(m,k)$-firecrackers tree, denoted by $F(m,k)$, is the tree obtained by taking $m$ copies of a $(k-1)$-star and identifying a leaf of each of them to a vertex of $P_m$. A tree is called a caterpillar if the removal of all its degree-one vertices results in a path, called the spine. Denote by $C(m,k)$ the caterpillar in which the spine has length $m-3$ and all vertices on the spine have degree $k$.

In \cite{Daphne1}, the weight of $T$ from $v \in V(T)$ is defined as $w_{T}(v)$ = $\sum_{u \in V(T)}d_{T}(u,v)$ and the weight of $T$ as $w(T)$ = $\min\{w_{T}(v) : v \in V(T)\}$. A vertex $v \in V(T)$ is a weight center of $T$ if $w_{T}(v)$ = $w(T)$. Denote the set of weight center(s) by $W(T)$. In \cite{Daphne1}, author viewed a tree $T$ rooted at a weight center $w$ and defined the {\it level function} on $V(T)$ from fix root $w$ by $L_{w}(u)$ = $d_{T}(w,u)$ for any $u \in V(T)$. For any two vertices $u$ and $v$, if $u$ is on the $(w,v)$-path ($w$ is a weight center), then $u$ is an {\it ancestor} of $v$, and $v$ is a {\it descendent} of $u$. If $u$ is a neighbour of a weight center $w$ then the subtree induced by $u$ together with all its descendants is called a {\it branch} at $u$. Two branches are called {\it different} if they are induced by two different vertices adjacent to the same weight center $w$. Using these terms and notations, Liu presented the following result in \cite{Daphne1}.
\begin{theorem}\cite{Daphne1} Let $T$ be an $n$-vertex tree with diameter $d$. Then
\begin{equation}\label{eq:lb0}
\rn(T) \geq (n-1)(d+1)+1-2w(T).
\end{equation}
Moreover, the equality holds if and only if for every weight center $w^{*}$, there exists a radio labeling $f$ with $0 = f(u_{0}) < f(u_{1}) < \ldots < f(u_{n-1})$, where all the following hold (for all $0 \leq i \leq n-2$);
\begin{enumerate}[\rm (1)]
\item $u_{i}$ and $u_{i+1}$ are in different branches (unless one of them is $w^{*}$);
\item $\{u_{0},u_{n-1}\} = \{w^{*},v\}$, where $v$ is some vertex with $L_{w^{*}}(v) = 1$;
\item $f(u_{i+1}) = f(u_{i})+d+1-L_{w^{*}}(u_{i})-L_{w^{*}}(u_{i+1})$.
\end{enumerate}
\end{theorem}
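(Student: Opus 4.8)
The plan is to prove the lower bound by a telescoping argument over the ordering induced by $f$, and then read off the equality conditions by tracking exactly where the estimates can be tight. First I would fix an arbitrary radio labeling $f$ and an arbitrary weight center $w^{*}$, and write the induced ordering $0 = f(u_{0}) < f(u_{1}) < \cdots < f(u_{n-1})$. Rooting $T$ at $w^{*}$, the two facts I would lean on are the triangle inequality $d_{T}(u,v) \le L_{w^{*}}(u) + L_{w^{*}}(v)$, with equality precisely when $w^{*}$ lies on the $(u,v)$-path (that is, when $u$ and $v$ lie in different branches, or one of them is $w^{*}$), and the weight-center identity $\sum_{u \in V(T)} L_{w^{*}}(u) = w_{T}(w^{*}) = w(T)$.

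For the bound itself, summing the defining radio inequality $f(u_{i+1}) - f(u_{i}) \ge d + 1 - d_{T}(u_{i},u_{i+1})$ over $i = 0, \ldots, n-2$ telescopes the left side to $\span(f)$, giving $\span(f) \ge (n-1)(d+1) - \sum_{i} d_{T}(u_{i},u_{i+1})$. Substituting $d_{T}(u_{i},u_{i+1}) \le L_{w^{*}}(u_{i}) + L_{w^{*}}(u_{i+1})$ and collecting terms, each interior level appears twice while the two endpoint levels appear once, so $\sum_{i}\bigl(L_{w^{*}}(u_{i}) + L_{w^{*}}(u_{i+1})\bigr) = 2w(T) - L_{w^{*}}(u_{0}) - L_{w^{*}}(u_{n-1})$. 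Since $u_{0} \ne u_{n-1}$ and $w^{*}$ is the only vertex at level $0$, at least one endpoint has level $\ge 1$, so $L_{w^{*}}(u_{0}) + L_{w^{*}}(u_{n-1}) \ge 1$; assembling these estimates yields $\span(f) \ge (n-1)(d+1) + 1 - 2w(T)$, and minimizing over $f$ gives \eqref{eq:lb0}.

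For the equality characterization I would argue both directions. For ``only if'', the key observation is that the chain above is valid for \emph{any} weight center $w^{*}$; hence if $\rn(T)$ equals the bound, an optimal $f$ must make every inequality in the chain tight at once. Termwise tightness of the radio inequality together with termwise tightness of the distance estimate forces $f(u_{i+1}) = f(u_{i}) + d + 1 - L_{w^{*}}(u_{i}) - L_{w^{*}}(u_{i+1})$, which is (3), and the tight distance estimate simultaneously puts consecutive vertices in different branches, which is (1); tightness of the endpoint estimate forces $L_{w^{*}}(u_{0}) + L_{w^{*}}(u_{n-1}) = 1$, i.e.\ one endpoint is $w^{*}$ and the other has level $1$, which is (2). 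Since the derivation used nothing special about $w^{*}$, these conditions hold for every weight center. For ``if'', I would run the span computation in reverse: if a radio labeling $f$ satisfies (1)--(3), then (3) and (2) collapse the telescoping sum to exactly $(n-1)(d+1) + 1 - 2w(T)$, so $\rn(T) \le \span(f)$ equals the bound, and together with \eqref{eq:lb0} this forces equality.

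The step I expect to be the main obstacle is the bookkeeping around the distance--level inequality and its equality case: one must verify that $d_{T}(u,v) = L_{w^{*}}(u) + L_{w^{*}}(v)$ holds exactly when $w^{*}$ lies on the $(u,v)$-path, translate this cleanly into the ``different branches'' language of (1), and handle the degenerate case in which one of $u_{i}, u_{i+1}$ is $w^{*}$ itself. The second delicate point is the universal quantifier in the statement: the argument must make explicit that the entire inequality chain is weight-center-agnostic, so that equality in \eqref{eq:lb0} propagates the structural conditions (1)--(3) to every weight center simultaneously rather than to a single privileged one.
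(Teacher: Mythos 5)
This theorem is quoted from Liu \cite{Daphne1}; the present paper states it without proof, so there is no internal argument to compare against, but your proposal correctly reconstructs the standard proof from that source. The telescoping of the radio condition, the estimate $d_{T}(u,v)\le L_{w^{*}}(u)+L_{w^{*}}(v)$ with its equality case (equality exactly when $w^{*}$ lies on the $(u,v)$-path), the identity $\sum_{u\in V(T)}L_{w^{*}}(u)=w_{T}(w^{*})=w(T)$, and the termwise-tightness analysis giving conditions (1)--(3) for every weight center are exactly the right ingredients, and the argument is sound.
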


In \cite{Bantva2}, Bantva {\it et al.} gave a lower for the radio number of trees which is same as one given in \cite{Daphne1} but using different notations and also gave necessary and sufficient condition to achieve the lower bound. They viewed a tree $T$ rooted at $W(T)$: if $W(T) = \{w\}$, then $T$ is rooted at $w$; if $W(T) = \{w, w'\}$ (where $w$ and $w'$ are adjacent), then $T$ is rooted at $w$ and $w'$ in the sense that both $w$ and $w'$ are at level 0. They called two branches are {\it different} if they are at two vertices adjacent to the same weight center (which is same as in \cite{Daphne1}), and {\it opposite} if they are at two vertices adjacent to different weight centers. The later case occurs only when $T$ has two weight centers. They defined the {\it level of $u$} in $T$ as
\begin{equation}
L_{T}(u) := \mbox{min}\{d_{T}(u,x) : x \in W(T)\}, u \in V(T)
\end{equation}
and the {\it total level of $T$} as
\begin{equation}
L(T) := \displaystyle\sum_{u \in V(T)} L_{T}(u).
\end{equation}
Define
\begin{eqnarray*}
\ve(T) & = &\left\{
\begin{array}{l}
\begin{tabular}{lll}
1, & \mbox{ if $T$ has only one weight center}, \\
0, & \mbox{ if $T$ has two (adjacent) weight centers}.
\end{tabular}
\end{array}
\right.
\end{eqnarray*}

Using these terms and notation, Bantva {\it et al.} gave a lower bound for the radio number of trees in \cite{Bantva2} as follows.
\begin{theorem}\cite{Bantva2}\label{thm:lb}
Let $T$ be a tree with order $n$ and diameter $d \ge 2$. Denote $\ve = \ve(T)$. Then
\begin{equation}\label{eq:lb}
\rn(T) \ge (n-1)(d+\ve) - 2 L(T) + \ve.
\end{equation}
\end{theorem}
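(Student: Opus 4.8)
The plan is to bound $\span(f)$ from below for an \emph{arbitrary} radio labeling $f$ of $T$ and then take the minimum over all such $f$. Any radio labeling induces a linear order $u_0,u_1,\ldots,u_{n-1}$ with $0=f(u_0)<f(u_1)<\cdots<f(u_{n-1})=\span(f)$. Applying the radio condition \eqref{def:rn} to each consecutive pair gives $f(u_{i+1})-f(u_i)\ge d+1-d_T(u_i,u_{i+1})$ for $0\le i\le n-2$, and summing these $n-1$ inequalities telescopes on the left to
\begin{equation}\label{eq:planstep}
\span(f)\ \ge\ (n-1)(d+1)-\sum_{i=0}^{n-2}d_T(u_i,u_{i+1}).
\end{equation}
The entire problem is thereby reduced to bounding the total consecutive distance $\sum_{i=0}^{n-2}d_T(u_i,u_{i+1})$ from above in terms of $L(T)$ and $\ve$.

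The key tool I would establish is the distance estimate
\begin{equation}\label{eq:distlem}
d_T(u,v)\ \le\ L_T(u)+L_T(v)+1-\ve \qquad \text{for all } u,v\in V(T),
\end{equation}
proved by viewing $T$ as rooted at its weight center(s). If $W(T)=\{w\}$, then $\ve=1$, and the unique $(u,v)$-path passes through the lowest common ancestor $z$, so $d_T(u,v)=L_T(u)+L_T(v)-2L_T(z)\le L_T(u)+L_T(v)$. If $W(T)=\{w,w'\}$, then $\ve=0$; since $w$ and $w'$ are adjacent, $|d_T(x,w)-d_T(x,w')|=1$ for every vertex $x$, so each vertex lies strictly on one side of the edge $ww'$, and the extremal configuration occurs when $u$ and $v$ lie on opposite sides, forcing the $(u,v)$-path across $ww'$ and hence $d_T(u,v)=L_T(u)+L_T(v)+1$. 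Both cases are summarised by \eqref{eq:distlem}.

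Substituting \eqref{eq:distlem} into \eqref{eq:planstep}, the level contributions telescope, because each interior $L_T(u_j)$ with $1\le j\le n-2$ is counted twice while $L_T(u_0)$ and $L_T(u_{n-1})$ are counted once; thus $\sum_{i=0}^{n-2}\big(L_T(u_i)+L_T(u_{i+1})\big)=2L(T)-L_T(u_0)-L_T(u_{n-1})$. Collecting the constant $(n-1)(1-\ve)$ produced by \eqref{eq:distlem} yields
\ben
\span(f)\ \ge\ (n-1)(d+\ve)-2L(T)+L_T(u_0)+L_T(u_{n-1}).
\een
It then remains only to verify $L_T(u_0)+L_T(u_{n-1})\ge\ve$: for $\ve=0$ this is automatic, since levels are nonnegative, while for $\ve=1$ there is exactly one vertex of level $0$ (the unique weight center), so at most one of $u_0,u_{n-1}$ can have level $0$ and the sum is at least $1$. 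As $f$ was arbitrary, the claimed bound \eqref{eq:lb} for $\rn(T)$ follows.

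I expect the crux to be the distance estimate \eqref{eq:distlem}, and within it the two-center case, where one must justify that the two weight centers are adjacent and that no vertex is equidistant from them, so that the extra $+1$ appears precisely for pairs lying in opposite branches. Once \eqref{eq:distlem} is secured, the telescoping of both the label gaps and the level sums, together with the terminal bound $L_T(u_0)+L_T(u_{n-1})\ge\ve$, is routine.
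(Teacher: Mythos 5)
Your proof is correct and follows essentially the same route as the proof of this bound in the cited source \cite{Bantva2} (the present paper only states the theorem with a citation): order the vertices by increasing label, telescope the consecutive radio-condition gaps to get $\span(f)\ge(n-1)(d+1)-\sum_i d_T(u_i,u_{i+1})$, bound each consecutive distance by $L_T(u)+L_T(v)+1-\ve$ via the weight-center rooting, and finish with $L_T(u_0)+L_T(u_{n-1})\ge\ve$. The one point you flag as needing justification --- that $W(T)$ is either a single vertex or two adjacent vertices, with no vertex equidistant from the two --- is a standard fact about tree medians that the paper's setup already assumes, so no gap remains.
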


The readers should note that both lower bounds in \eqref{eq:lb0} and \eqref{eq:lb} are identical because when $W(T)$ = \{$w$\} then $L(T)$ = $w(T)$ and when $W(T)$ = \{$w,w'$\} then $L(T)$ = $w(T)+n/2$. Hence in our further discussion, we denote the identical right-hand side of \eqref{eq:lb0} and \eqref{eq:lb} by $lb(T)$ for a given tree $T$. The next result is a necessary and sufficient condition for $\rn(T) = lb(T)$ given in \cite{Bantva2}.

\begin{theorem}\cite{Bantva2}\label{thm:ub}
Let $T$ be a tree with order $n$ and diameter $d \ge 2$. Denote $\ve = \ve(T)$. Then
\begin{equation}\label{eq:ub}
\rn(T) = (n-1)(d+\ve) - 2 L(T) + \ve
\end{equation}
holds if and only if there exists a linear order $u_{0},u_{1},\ldots,u_{n-1}$ of the vertices of $T$ such that
\begin{enumerate}[(a)]
\item $u_{0} = w$ and $u_{n-1} \in N(w)$ when $W(T) = \{w\}$, and $\{u_{0}, u_{n-1}\} = \{w,w'\}$ when $W(T) = \{w,w'\}$;
\item the distance $d_{T}(u_{i}, u_{j})$ between $u_{i}$ and $u_{j}$ in $T$ satisfies $(0 \leq i < j \leq n-1)$
\end{enumerate}
\begin{equation}\label{eq:dij}
d_{T}(u_{i},u_{j}) \geq \displaystyle\sum_{t = i}^{j-1} (L_{T}(u_{t})+L_{T}(u_{t+1})) - (j - i)(d+\ve) + (d+1).
\end{equation}
Moreover, under this condition the mapping $f$ defined by
\begin{equation}\label{eq:f0}
f(u_{0}) = 0
\end{equation}
\begin{equation}\label{eq:f}
f(u_{i+1}) = f(u_{i}) - L_{T}(u_{i+1}) - L_{T}(u_{i}) + (d + \ve),\;\, 0 \leq i \leq n-2
\end{equation}
is an optimal radio labeling of $T$.
\end{theorem}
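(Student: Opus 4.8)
The plan is to prove the two implications separately, both resting on a single structural estimate for distances measured from the weight center(s). First I would record the inequality
\[
d_{T}(u,v) \le L_{T}(u) + L_{T}(v) + (1-\ve),
\]
valid for all $u,v \in V(T)$, with equality precisely when $u$ and $v$ lie in different branches (if $W(T) = \{w\}$) or in opposite branches (if $W(T) = \{w,w'\}$). This follows because the $(u,v)$-path either passes through a single weight center, contributing $L_{T}(u)+L_{T}(v)$, or crosses the edge $ww'$ joining the two weight centers, contributing the extra $+1 = (1-\ve)$; when $u$ and $v$ share a branch the path is strictly shorter. The centroid (balance) property of weight centers guarantees that the two-center case occurs only for adjacent $w,w'$, so these are the only cases.

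For the sufficiency direction, I would assume an order $u_{0},\ldots,u_{n-1}$ satisfying (a) and (b) exists and define $f$ by \eqref{eq:f0}--\eqref{eq:f}. Telescoping \eqref{eq:f} gives
\[
f(u_{n-1}) = (n-1)(d+\ve) - 2L(T) + L_{T}(u_{0}) + L_{T}(u_{n-1}),
\]
and condition (a) forces $L_{T}(u_{0})+L_{T}(u_{n-1}) = \ve$ in both weight-center cases, so $\span(f) = lb(T)$. The $j=i+1$ instance of \eqref{eq:dij}, combined with the displayed distance bound, forces $d_{T}(u_{i},u_{i+1}) = L_{T}(u_{i})+L_{T}(u_{i+1})+1-\ve$; hence each increment in \eqref{eq:f} equals $d+1-d_{T}(u_{i},u_{i+1}) \ge 1$, so $f$ is strictly increasing with distinct nonnegative labels. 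For arbitrary $i<j$, summing the increments and rearranging shows that the radio condition $f(u_{j})-f(u_{i}) \ge d+1-d_{T}(u_{i},u_{j})$ is exactly equivalent to \eqref{eq:dij}; thus $f$ is a radio labeling of span $lb(T)$. With Theorem \ref{thm:lb} supplying $\rn(T)\ge lb(T)$, we conclude $\rn(T)=lb(T)$ and that $f$ is optimal.

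For the necessity direction, I would take an optimal labeling $f$ inducing the order $0=f(u_{0})<\cdots<f(u_{n-1})=lb(T)$, replacing $f$ by $lb(T)-f$ if needed so that $u_{0}=w$ when $W(T)=\{w\}$ (the radio condition is invariant under this reflection). Writing $\span(f)$ as a telescoping sum of consecutive gaps, bounding each gap below by $d+1-d_{T}(u_{i},u_{i+1})$ and then applying the distance estimate, reproduces exactly the chain of inequalities that yields $lb(T)$. Since the total is forced to equal $lb(T)$, every inequality is tight: the radio condition holds with equality on consecutive pairs, each consecutive pair lies in different/opposite branches, and $L_{T}(u_{0})+L_{T}(u_{n-1})=\ve$. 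The last equality pins the endpoints down as in (a), the first two yield the increment formula \eqref{eq:f}, and feeding these increments back into the radio inequalities for all pairs $i<j$ produces \eqref{eq:dij}, establishing (b). The same computation exhibits the $f$ of \eqref{eq:f0}--\eqref{eq:f} as optimal, giving the ``moreover'' clause.

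I expect the main obstacle to be the careful verification of the distance estimate and its equality case in the two-weight-center regime, where the unit contributed by the edge $ww'$ must be tracked consistently through the telescoping sum; handling the endpoint bookkeeping so that $L_{T}(u_{0})+L_{T}(u_{n-1})=\ve$ comes out correctly in both regimes is precisely where the delicate $+\ve$ term of $lb(T)$ is won or lost.
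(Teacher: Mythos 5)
Your proposal is sound, but note that this theorem is quoted in the paper from \cite{Bantva2} as a preliminary result and is not proved here, so there is no in-paper proof to compare against. Your argument — the distance estimate $d_{T}(u,v)\le L_{T}(u)+L_{T}(v)+(1-\ve)$ with its equality characterization, the telescoping of \eqref{eq:f} to get span $lb(T)$, the equivalence of the pairwise radio condition with \eqref{eq:dij}, and the tightness analysis forcing (a) and (b) in the converse — is a correct reconstruction of the standard proof given in the cited source, and I see no gaps in it.
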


A tree $T$ for which $\rn(T)$ is given by \eqref{eq:ub} is called a {\it lower bound tree}. We denote the set of all lower bound trees by $\mathcal{T}_{lb}$. Hence, $\mathcal{T}_{lb}$ = \{$T$ : $\rn(T)$ = $lb(T)$\}. Our aim is to add more and more trees $T$ in the set $\mathcal{T}_{lb}$. Some known trees which are members as well as non-members of this set are as follows (see Table \ref{Table1} also). In \cite{Liu}, Liu and Zhu determined the radio number of paths. It is easy to check that $P_{2k}$ are lower bound paths while $P_{2k+1}$ are not lower bound paths. In \cite{Li}, Li {\it et al.} gave the radio number of complete $m$-ary trees ($m \geq 3$) which are lower bound trees. However, the complete binary trees whose radio number is also determined in \cite{Li} are not lower bound trees. In \cite{Tuza}, Hal\'asz and Tuza determined the exact radio number of complete level-wise regular trees with  all non-leaf vertices of degree more than two which are lower bound trees. In \cite{Bantva2}, Bantva {\it et al.} determined the radio number of banana trees and firecrackers trees which both are lower bound trees. The authors also determined the radio number of $C(m,k)$ in \cite{Bantva2} and it is easy to show that $C(m,k)$ are lower bound trees when $m$ is even and non-lower bound trees when $m$ is odd. It is interesting and challenging task to find lower bound trees of more complex structure. Note that even an addition or a deletion of a vertex or an edge make  lower bound tree to non-lower bound tree and vice-versa. For example, it is known that a path $P_{2k+1}\;(k \geq 1)$ is not a lower bound tree but deletion of one leaf vertex from a path $P_{2k+1}$ makes a path $P_{2k}$ which is a lower bound tree. Similarly, the converse procedure of above for a path $P_{2k}$ makes a lower bound tree to non lower bound tree.

Further recall that a tree represents a network of transmitters and often such a network of transmitters is expanded then it is expected that the resultant graph is to be a tree due to simple structure properties of trees. Moreover, the network operator additionally wants that the large tree obtained by expansion of network is to be a lower bound tree as the lower bound tree minimize the spectrum of channels. This motivated us to find more lower bound trees of complex structure. Usually trees of more complex structure are constructed using some graph operation on given tree $T$ or a family of trees $T_{i}, i=1,2,...,k$.  Hence, it raises the following question.

\begin{question} How to find large lower bound tree using a known lower bound tree $T$ or a family of lower bound trees $T_i, i=1,2,\ldots,k$ ?
\end{question}

Since the question is unlikely to answer completely, the task remains to present examples and constructions to find lower bound trees to answer the above question. In the next section, we give three constructions to find large lower bound trees from given lower bound tree $T$ or a family of lower bound trees $T_i,\;i=1,2,\ldots,k$.

\section{Main results}\label{Main:sec}
We consider a tree $T$ of order $n_{0}$ and diameter $d_{0}$ with weight center $w_{0}$. In case of a family of trees, we consider trees $T_{i}$ ($1 \leq i \leq k$) of order $n_{i}$ and diameter $d_{i}$ with single weight center $w_{i}$. If $T_{x}$ is any tree obtained by taking graph operation on $T$ or a family of trees $T_{i}$ then we take $|T_{x}|$ = $n$ and diam($T_{x}$) = $d$. Let $k \geq 2$ be an integer. A $k$-star $S_{k}$ is a tree consisting of $k$ leaves and another vertex joined to all leaves by edges. A $k$-double star $D_{k}$ is a tree which is formed by joining $k$ edges to each of the two vertices of $K_{2}$. It is known that both $k$-star $S_k$ and $k$-double star $D_k$ are lower bound trees. Let $*_{1}$ be the graph operation which identifies weight centers $w_{i}$ of trees $T_{i}$, $1 \leq i \leq k$ with a single vertex $w$. Denote the tree obtained by taking graph operation $*_{1}$ on trees $T_{i}$, $1 \leq i \leq k$ by $T_{w_{k}}$. Note that the weight center of $T_{w_{k}}$ is $w$ and $|T_{w_{k}}|$ = $\sum_{i=1}^{k}n_{i}-k+1$. Let $T$ and $T'$ be two trees such that $W(T)$ = \{$w_0$\} and $T'$ has $m$ leaves. Let $*_{2}$ be the graph operation which identifies a weight center $w_0$ of a copy of tree $T$ at each leaf of $T'$ denoted by $T_{T'}$. Let $T_{S_k}$ and $T_{D_k}$ denote the tree obtained by taking graph operation $*_{2}$ of a tree $T$ with $k$-star $S_k$ and $k$-double star $D_k$, respectively. Note that $|W(T_{S_k})|=1$ and $|W(T_{D_k})|=2$. Moreover, if $w$ is the vertex adjacent to all leaves in $S_k$ and $w_1,w_2$ are two adjacent vertices adjacent to all leaves in $D_k$ then $W(T_{S_k}) = \{w\}$ and $W(T_{D_k}) = \{w_1,w_2\}$. It is clear that  $|T_{S_{k}}|$ = $kn_{0}+1$ and $|T_{D_{k}}|$ = $2(kn_{0}+1)$.

\begin{observation} Let $T_{w_{k}}$, $T_{S_{k}}$ and $T_{D_{k}}$ be defined as above then the following hold.
\begin{enumerate}[\rm (a)]
  \item $|T_{w_{k}}|$ = $\sum_{i=1}^{k}n_{i}-k+1$, $|T_{S_{k}}|$ = $kn_{0}+1$ and $|T_{D_{k}}|$ = $2(kn_{0}+1)$.
  \item $\diam(T_{w_{k}}) \geq \diam(T_{i})$ $(i=1,\ldots,k)$, $\diam(T_{S_{k}}) \geq \diam(T)+2$ and $\diam(T_{D_{k}}) \geq \diam(T)+3$.
  \item For any $u \in V(T_{w_{k}})$, $L_{T_{w_{k}}}(u)$ = $L_{T_{i}}(u)$.
  \item For any $u \in V(T_{S_{k}})$, $L_{T_{S_{k}}}(u)$ = $L_{T}(u)+1$.
  \item For any $u \in V(T_{D_{k}})$, $L_{T_{D_{k}}}(u)$ = $L_{T}(u)+1$.
\end{enumerate}
\end{observation}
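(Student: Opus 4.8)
The plan is to verify the five items separately, since each concerns a single invariant of the constructed tree: (a) and (b) record the order and diameter, while (c)--(e) record the level function, and all of them reduce to tracking distances through the identification vertices. Throughout I would use the locations of the weight centres already fixed in the construction, namely $W(T_{w_k}) = \{w\}$, $W(T_{S_k}) = \{w\}$ and $W(T_{D_k}) = \{w_1,w_2\}$, together with the basic feature of trees that between any two vertices there is a unique path; consequently, in a tree glued from pieces, the distance between two vertices is obtained by concatenating distances across the identification vertex, and each piece sits isometrically inside the glued tree.

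For (a) I would count vertices by accounting for the overlaps at the gluing points. In $T_{w_k}$ the $k$ trees contribute $\sum_i n_i$ vertices, but the $k$ weight centres $w_i$ are merged into the single vertex $w$, so $k-1$ vertices are lost, giving $\sum_i n_i - k + 1$. In $T_{S_k}$ the star contributes its centre $w$, while each of the $k$ leaves is identified with the weight centre $w_0$ of a copy of $T$ and hence is absorbed into that copy; thus the vertex set is $\{w\}$ together with $k$ disjoint copies of $T$, giving $1 + kn_0$. The double-star case is identical with two centres and $2k$ leaves, giving $2 + 2kn_0 = 2(kn_0+1)$.

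For (b) the task is to exhibit a pair of vertices realising the claimed distance, and this is the step I expect to be the most delicate, since it rests on the inequality $\mathrm{ecc}_T(w_0) \ge d_0/2$. Writing $h = \max_{x \in V(T)} L_T(x) = \mathrm{ecc}_T(w_0)$ and letting $a,b$ be the ends of a diametral path of $T$, the triangle inequality gives $d_0 = d_T(a,b) \le d_T(a,w_0) + d_T(w_0,b) \le 2h$, so $2h \ge d_0$. For $T_{w_k}$ each $T_i$ is isometric in $T_{w_k}$ (the shared vertex $w = w_i$ lies in $T_i$, and the unique path between two vertices of $T_i$ stays in $T_i$), whence $\diam(T_{w_k}) \ge \diam(T_i)$. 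For $T_{S_k}$ I would take deepest vertices $u,v$ (at level $h$ in their copies) in two distinct copies; the unique $u$--$v$ path runs $u \to w_0^{(i)} \to w \to w_0^{(j)} \to v$ of length $h + 1 + 1 + h = 2h+2 \ge d_0 + 2$. For $T_{D_k}$ I would choose such deepest vertices in a copy hanging from a leaf of $w_1$ and one hanging from a leaf of $w_2$; the path additionally traverses the edge $w_1 w_2$, giving $h + 1 + 1 + 1 + h = 2h+3 \ge d_0+3$. Here $k \ge 2$ guarantees the existence of two distinct copies.

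Finally, (c)--(e) follow by direct distance computation against the weight centres. In $T_{w_k}$ each $u \neq w$ lies in a unique $T_i$, and since $w$ is the sole weight centre, $L_{T_{w_k}}(u) = d_{T_{w_k}}(u,w) = d_{T_i}(u,w_i) = L_{T_i}(u)$. In $T_{S_k}$, for $u$ in the copy attached at the leaf $\ell = w_0$, the sole weight centre being $w$ gives $L_{T_{S_k}}(u) = d(u,\ell) + d(\ell,w) = L_T(u) + 1$. In $T_{D_k}$, for $u$ in a copy hanging from a leaf $\ell$ of $w_1$ one has $d(u,w_1) = L_T(u)+1$ and $d(u,w_2) = L_T(u)+2$, so $L_{T_{D_k}}(u) = \min\{d(u,w_1),d(u,w_2)\} = L_T(u)+1$, and symmetrically for copies hanging from $w_2$. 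The only care needed is to read $L_T(u)$ as the level in $T$ of the vertex corresponding to $u$, which is well defined because every non-centre vertex belongs to exactly one copy of $T$, while the centres themselves have level $0$.
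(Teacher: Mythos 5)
The paper states this Observation without any proof, treating all five items as immediate from the constructions, so there is no argument of the authors' to compare against; your verification is correct and supplies exactly the details one would expect, the only genuinely substantive step being the eccentricity bound $2\,\mathrm{ecc}_T(w_0)\ge d_0$ that underlies the diameter inequalities in (b). Your closing remark about the identified centre vertices is also the right caveat: as literally written, (d) and (e) do not apply to the star (or double-star) centres themselves, since those vertices lie in no copy of $T$, and your reading of them as level-$0$ exceptions is the sensible repair.
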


\begin{theorem}\label{thm:Twk}\footnote{We come to know that the similar type of result is also appeared in \cite{Chavez} without use of Theorem \ref{thm:ub}, however this result was published earlier than \cite{Chavez} in \cite{Bantva3} without proof (as an extended abstract form) and thus we included it here.} If $T_{i} \in \mathcal{T}_{lb}$, $1 \leq i \leq k$ then $T_{w_{k}} \in \mathcal{T}_{lb}$ and \begin{equation}\label{rn:Twk}
\rn(T_{w_{k}}) = \displaystyle\sum_{i=1}^{k} (\rn(T_{i})+(n_{i}-1)(d-d_{i}))-k+1.
\end{equation}
\end{theorem}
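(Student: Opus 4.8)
The plan is to establish the two bounds $\rn(T_{w_k})\ge lb(T_{w_k})$ and $\rn(T_{w_k})\le lb(T_{w_k})$ and, along the way, to check that $lb(T_{w_k})$ coincides with the right-hand side of \eqref{rn:Twk}; the equality $\rn(T_{w_k})=lb(T_{w_k})$ is precisely the assertion $T_{w_k}\in\mathcal T_{lb}$. Throughout I abbreviate $\ell(\cdot):=L_{T_{w_k}}(\cdot)$ and write $n=|T_{w_k}|$, $d=\diam(T_{w_k})$. Since $W(T_{w_k})=\{w\}$ we have $\ve(T_{w_k})=1$. By the Observation above, $n-1=\sum_{i=1}^{k}(n_i-1)$, and since $\ell(u)=L_{T_i}(u)$ for every vertex $u$ in the copy of $T_i$ while the common root $w$ has level $0$, summing levels over $V(T_{w_k})$ gives $L(T_{w_k})=\sum_{i=1}^{k}L(T_i)$. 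Theorem \ref{thm:lb} then yields $\rn(T_{w_k})\ge lb(T_{w_k})=(n-1)(d+1)-2L(T_{w_k})+1$. Each $T_i$ has a single weight center, so $\ve(T_i)=1$, and being a lower bound tree it satisfies $\rn(T_i)=(n_i-1)(d_i+1)-2L(T_i)+1$; solving for $2L(T_i)$ and substituting converts $lb(T_{w_k})$ into $\sum_{i=1}^{k}\bigl(\rn(T_i)+(n_i-1)(d-d_i)\bigr)-k+1$, which is exactly the claimed value. This settles the lower bound and pins down $lb(T_{w_k})$.

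For the upper bound I would invoke Theorem \ref{thm:ub}, so it suffices to exhibit a linear order of $V(T_{w_k})$ satisfying (a) and \eqref{eq:dij}. Applying Theorem \ref{thm:ub} to each $T_i$ supplies an order $w_i=u^{(i)}_0,u^{(i)}_1,\ldots,u^{(i)}_{n_i-1}$ of $V(T_i)$ with $u^{(i)}_{n_i-1}\in N(w_i)$ and obeying \eqref{eq:dij} inside $T_i$. I would concatenate these, writing the common root only once at the front, namely $w,\,u^{(1)}_1,\ldots,u^{(1)}_{n_1-1},\,u^{(2)}_1,\ldots,u^{(k)}_{n_k-1}$; this lists all $n$ vertices. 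Condition (a) is immediate, since the first vertex is $w=W(T_{w_k})$ and the last vertex $u^{(k)}_{n_k-1}$ lies in $N(w)$. To verify \eqref{eq:dij} I would use that, because levels and within-branch distances are preserved, \eqref{eq:dij} is equivalent to the radio inequality $d_{T_{w_k}}(u_p,u_q)+(f(u_q)-f(u_p))\ge d+1$ for the labeling $f$ of \eqref{eq:f}. For a pair in the same block $T_i$ the label gap exceeds the corresponding gap in $T_i$ by exactly $(q-p)(d-d_i)$ while the distance is unchanged, so the $T_i$-inequality together with $d\ge d_i$ (hence $(q-p-1)(d-d_i)\ge0$) gives the claim.

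The crux, and the step I expect to be the main obstacle, is a pair $u_p,u_q$ lying in different blocks. Here the shortest $(u_p,u_q)$-path runs through $w$, so $d_{T_{w_k}}(u_p,u_q)=\ell(u_p)+\ell(u_q)$ is small while $f(u_q)-f(u_p)$ accumulates many $(d+1)$-gaps; the desired inequality collapses to $2\sum_{t=p+1}^{q-1}\ell(u_t)\le(q-p-1)(d+1)$. I would resolve this by splitting the intermediate vertices $u_{p+1},\ldots,u_{q-1}$ into the suffix of block $i$, the full intermediate blocks, and the prefix of block $j$, and bounding each piece. Applying \eqref{eq:dij} inside $T_i$ to the pair $(w_i,u^{(i)}_s)$ and using $d_{T_i}(w_i,u^{(i)}_s)=\ell(u^{(i)}_s)$ gives the prefix estimate $2\sum_{t=1}^{s-1}\ell(u^{(i)}_t)\le(s-1)(d_i+1)\le(s-1)(d+1)$, which covers every full intermediate block (take $s=n_i$) and the prefix of block $j$. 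Applying \eqref{eq:dij} to the pair $(u^{(i)}_a,u^{(i)}_{n_i-1})$ together with the triangle bound $d_{T_i}(u^{(i)}_a,u^{(i)}_{n_i-1})\le\ell(u^{(i)}_a)+1$ yields the matching suffix estimate. Summing the three pieces gives the cross-block inequality, so \eqref{eq:dij} holds for all pairs. Consequently $f$ is an optimal radio labeling with span $lb(T_{w_k})$, whence $\rn(T_{w_k})\le lb(T_{w_k})$; combined with the first paragraph this forces $\rn(T_{w_k})=lb(T_{w_k})$, establishing both $T_{w_k}\in\mathcal T_{lb}$ and the formula \eqref{rn:Twk}.
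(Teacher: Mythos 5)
Your proof is correct and shares the paper's skeleton: the same concatenated linear order $w,u^{(1)}_1,\ldots,u^{(1)}_{n_1-1},u^{(2)}_1,\ldots,u^{(k)}_{n_k-1}$, the same reduction to Theorem \ref{thm:ub}, the same treatment of same-block pairs, and the same algebra converting $lb(T_{w_k})$ into \eqref{rn:Twk}. The genuine divergence is in the cross-block case of \eqref{eq:dij}, which you rightly call the crux. The paper bounds $2\sum_{t}L_{T_{w_k}}(u_t)$ over the intermediate vertices by $(\mbox{number of terms})\cdot 2\alpha$, where $\alpha$ is the maximum intermediate level, and then invokes the claim $2\alpha\le d$; that claim is asserted without proof, and what the hypothesis actually yields (by applying \eqref{eq:dij} inside $T_i$ to consecutive triples, using $d_{T_i}(x,y)\le L_{T_i}(x)+L_{T_i}(y)$) is only $2\alpha\le d_i+1\le d+1$ --- which still closes the paper's estimate, but only just. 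You instead split the intermediate vertices block by block and bound each block's contribution by $(\mbox{number of terms})(d_i+1)\le(\mbox{number of terms})(d+1)$ via \eqref{eq:dij} applied inside that $T_i$ to pairs involving the root $w_i$ or the terminal vertex $u^{(i)}_{n_i-1}$; this derives everything directly from the hypothesis that each $T_i$ is a lower bound tree and needs no auxiliary structural fact about levels versus diameter, so it is the more robust route. One small slip in your write-up: for a full intermediate block you say ``take $s=n_i$'', but $u^{(i)}_{n_i}$ does not exist; instead apply your prefix estimate with $s=n_i-1$ and account for the last vertex separately via $2L_{T_{w_k}}(u^{(i)}_{n_i-1})=2\le d+1$ (it has level $1$), exactly as in your suffix estimate. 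With that repair the three pieces sum to $(q-p-1)(d+1)$ and the argument is complete.
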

\begin{proof} We prove that $\rn(T_{w_{k}})$ = $lb(T_{w_{k}})$ and for this purpose it is enough to give a linear order $u_{0},u_{1},...,u_{n-1}$ of vertices of $T_{w_{k}}$ which satisfies the conditions of Theorem \ref{thm:ub}.

Since each $T_{i} \in \mathcal{T}_{lb}$ ($1 \leq i \leq k$), the radio number of each $T_{i}, i=1,2,\ldots,k$ is given by
\begin{equation}\label{rn1} \rn(T_{i}) = (n_{i}-1)(d_{i}+1)-2L(T_{i})+1.\end{equation}
Moreover, by Theorem \ref{thm:ub}, there exists a linear order $u^{i}_{0},u^{i}_{1},...,u^{i}_{n_{i}-1}$ of $V(T_{i})$, $1 \leq i \leq k$ which satisfies the conditions (a) $u^{i}_{0}$ = $w_{i}$ and $u^{i}_{n_{i}-1} \in N(w_{i})$ and (b) $d_{T_{i}}(u^{i}_{l},u^{i}_{m}) \geq \sum_{t=l}^{m-1}(L_{T_{i}}(u_{t})+L_{T_{i}}(u_{t+1}))-(l-m)(d_{i}+1)+(d_{i}+1), 0 \leq l < m \leq n_{i}-1$. The radio labeling defined by \eqref{eq:f0}-\eqref{eq:f} is an optimal radio labeling whose span is the right-hand side of \eqref{rn1}.

Now we define a linear order $u_{0},u_{1},...,u_{n-1}$ of vertices of $T_{w_{k}}$ as follows: Let $u_{0}$ = $w$ and for all other $u_{t}, 1 \leq t \leq n-1$ we use the following Algorithm 1.
\begin{algorithm}
\caption{A linear order $\vec{u} := \{u_0,u_1,\ldots,u_{n-1}\}$ of $V(T_{w_{k}})$.}
\hspace*{\algorithmicindent} \textbf{Input:} A list of linear orders $u^{i}_{0},u^{i}_{1},\ldots,u^{i}_{n_{i}-1}$ of $V(T_{i})$, $i=1,2,\ldots,k$ and $w$.
\begin{algorithmic}[1]
\State $u_{0} \leftarrow w$
\State $n_{0} \leftarrow 0$
\For{\texttt{$1 \leq i \leq k$}}
\For{\texttt{$1 \leq j \leq n_i-1$}}
\State $t \leftarrow j+\displaystyle\sum_{z=1}^{i}n_{z-1}$
\State $u_{t} \leftarrow u^{i}_{j}$
\EndFor
\EndFor
\State \textbf{return} $\vec{u}:=\{u_0,u_1,\ldots,u_{n-1}\}$
\end{algorithmic}
\hspace*{\algorithmicindent} \textbf{Output:} A linear order $\vec{u} = \{u_0,u_1,\ldots,u_{n-1}\}$.
\end{algorithm}

Then $u_{n-1} \in N(w)$ and for each $0 \leq i \leq n-1$, $u_{i}$ and $u_{i+1}$ are in different branches.

\textsf{Claim:} The linear order \{$u_{0},u_{1},...,u_{n-1}$\} satisfies \eqref{eq:dij}.

Let $u_{l}, u_{m}$ be two arbitrary vertices. Without loss of generality, we assume $l-m \geq 2$. We denote the right-hand side of \eqref{eq:dij} by $S_{i,j}$ for simplicity. If $u_{l},u_{m} \in V(T_{i})$ then $u_{l}$ = $u_{x}^{i}$ and $u_{m}$ = $u_{y}^{i}$ for some $i$. Note that $d_{T_{w_{k}}}(u_{l},u_{m})$ = $d_{T}(u_{x}^{i},u_{y}^{i})$ and $u_{t} \in V(T_{i})$ for $l \leq t \leq m$. Hence, we have $S_{i,j}$ = $\sum_{t=l}^{m-1}(L_{T_{w_{k}}}(u_{t})+L_{T_{w_{k}}}(u_{t+1}))-(l-m)(d+1)+(d+1) \leq \sum_{t=l}^{m-1}(L_{T_{w_{k}}}(u_{t})+L_{T_{w_{k}}}(u_{t+1}))-(l-m)(d_{i}+1)+(d_{i}+1) = \sum_{t=l}^{m-1}(L_{T_{i}}(u_{t}^{i})+L_{T_{i}}(u_{t+1}^{i}))-(l-m-1)(d_{i}+1) \leq d_{T_{i}}(u_{x}^{i},u_{y}^{i}) = d_{T_{w_{k}}}(u_{l},u_{m})$. If $u_{l} \in V(T_{i})$ and $u_{m} \in V(T_{j})$ then $u_{l}$ = $u_{x}^{i}$ and $u_{m}$ = $u_{y}^{j}$. Note that $d_{T_{w_{k}}}(u_{l},u_{m})$ = $L_{T_{w_{k}}}(u_{l})+L_{T_{w_{k}}}(u_{m})$ as $u_{l}$ and $u_{m}$ are in different branches. Let $\alpha$ = max\{$L_{T_{w_{k}}}(u_{a}) : l < a < m$\} then $2\alpha \leq d$ and hence $S_{i,j}$ = $\sum_{t=l}^{m-1}(L_{T_{w_{k}}}(u_{t})+L_{T_{w_{k}}}(u_{t+1}))-(l-m)(d+1)+(d+1) = L_{T_{w_{k}}}(u_{l})+L_{T_{w_{k}}}(u_{m})+2\sum_{t=l+1}^{m-1}L_{T_{w_{k}}}(u_{t})-(l-m-1)(d+1) \leq L_{T_{w_{k}}}(u_{l})+L_{T_{w_{k}}}(u_{m})-(l-m-1)(d+1-2\alpha) \leq L_{T_{w_{k}}}(u_{l})+L_{T_{w_{k}}}(u_{m}) = d_{T_{w_{k}}}(u_{l},u_{m})$.

Thus a linear order $u_{0},u_{1},...,u_{n-1}$ of vertices of $T_{w_{k}}$ satisfies the conditions of Theorem \ref{thm:ub}. The radio number for $T_{w_{k}}$ is given by the right-hand side of \eqref{eq:ub} for which $n$ = $n_{1}+n_{2}+...+n_{k}-k+1$, $L(T_{w_{k}})$ = $L(T_{1})+L(T_{2})+...+L(T_{k})$ and using \eqref{rn1} we have

\bean
\rn(T_{w_{k}}) & = & (n-1)(d+1)-2L(T_{w_{k}})+1 \\
& = & \Bigg(\displaystyle\sum_{t=1}^{k}n_{i}-k\Bigg)(d+1)-2\Bigg(\displaystyle\sum_{t=1}^{k}L(T_{i})\Bigg)+1 \\
& = & \displaystyle\sum_{t=1}^{k}(n_{i}-1)(d+1)-2\Bigg(\displaystyle\sum_{t=1}^{k}L(T_{i})\Bigg)+1 \\
& = & \displaystyle\sum_{t=1}^{k}(n_{i}-1)(d-d_{i}+d_{i}+1)-2\Bigg(\displaystyle\sum_{t=1}^{k}L(T_{i})\Bigg)+1
\eean
\bean
& = & \displaystyle\sum_{i=1}^{k}\rn(T_{i})+\displaystyle\sum_{i=1}^{k}(n_{i}-1)(d-d_{i})-k+1.
\eean
\end{proof}

\begin{theorem}\label{thm:TSk} If $T \in \mathcal{T}_{lb}$ then $T_{S_{k}} \in \mathcal{T}_{lb}$, where $k \geq 3$ and
\begin{equation}\label{rn:TSk}
\rn(T_{S_{k}}) = k(\rn(T)+n_{0}(d-d_{0}-2)+d_{0})+1.
\end{equation}
\end{theorem}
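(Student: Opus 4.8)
The plan is to prove $\rn(T_{S_k}) = lb(T_{S_k})$ by producing a linear order $u_0, u_1, \ldots, u_{n-1}$ of $V(T_{S_k})$ (with $n = kn_0 + 1$ and $\ve = 1$, since $W(T_{S_k}) = \{w\}$) that satisfies conditions (a) and (b) of Theorem \ref{thm:ub}; the value \eqref{rn:TSk} then follows purely by algebra from \eqref{eq:ub}, using $L(T_{S_k}) = k(L(T) + n_0)$ and $\rn(T) = (n_0-1)(d_0+1) - 2L(T) + 1$. Before the construction I would record two facts about levels in $T_{S_k}$. First, by observation (d) every vertex $u$ of a copy has $L_{T_{S_k}}(u) = L_T(u) + 1$, while $L_{T_{S_k}}(w) = 0$; consequently two vertices lying in different copies $T^{(a)}, T^{(b)}$ satisfy $d_{T_{S_k}}(u,v) = L_{T_{S_k}}(u) + L_{T_{S_k}}(v)$ (the connecting path runs through $w$), and the same holds when one of them is $w$. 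Second, observation (b) gives $d \ge d_0 + 2$; moreover, since $k \ge 2$, two maximum-level vertices in distinct copies realize a distance $2\max_u L_{T_{S_k}}(u) \le d$, so $2 L_{T_{S_k}}(u) \le d$ for every $u$.

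The construction must respect the branch structure: each entire copy $T^{(i)}$ is a single branch at $w$, so---unlike the concatenation used for $T_{w_k}$---consecutive vertices of the order are forced to lie in distinct copies. I would therefore interleave the copies in round-robin fashion. Take the optimal order of each copy $T^{(i)}$ guaranteed by Theorem \ref{thm:ub}; the inequality \eqref{eq:dij} is invariant under reversing an order (both sides are unchanged), so I may orient each copy's order to run from a near-leaf to its weight center $w_0^{(i)}$. Setting $u_0 = w$ and then filling positions $1,\ldots,n-1$ round by round---round $j$ listing the $j$-th vertices of copies $1,2,\ldots,k$ in turn---produces an order whose last vertex is $w_0^{(k)} \in N(w)$, so condition (a) holds. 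By construction any two vertices of the same copy keep their relative (internal) order, and between internal positions $x < y$ of a fixed copy there are exactly $O = (y-x)(k-1)$ vertices drawn from the other copies.

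The verification of \eqref{eq:dij}, whose right-hand side I abbreviate as $S_{ij} = L(u_i) + L(u_j) + 2\sum_{t=i+1}^{j-1} L(u_t) - (j-i-1)(d+1)$ (writing $L = L_{T_{S_k}}$), then splits into two cases. If $u_i, u_j$ lie in different copies (or one of them is $w$) then $d_{T_{S_k}}(u_i,u_j) = L(u_i) + L(u_j)$, and since every intermediate level obeys $2L(u_t) \le d < d+1$ the bracketed correction is $\le 0$, giving $S_{ij} \le d_{T_{S_k}}(u_i,u_j)$. If instead $u_i, u_j$ lie in the same copy $c$, at internal positions $x < y$, I would invoke \eqref{eq:dij} for $T$ itself to bound $d_T(u_x^c, u_y^c)$ from below, substitute $L = L_T + 1$, and isolate the contribution $\Sigma_O \le O \cdot \max_u L$ of the $O$ interspersed foreign vertices; using $d \ge d_0 + 2$ the copy-internal terms collapse to a constant $\le 2$, leaving the requirement $2 + 2\Sigma_O - O(d+1) \le 0$, i.e.\ essentially $O \ge 2$.

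This last inequality is the heart of the matter and the main obstacle: it is exactly what forces $k \ge 3$, since the round-robin spacing gives $O = (y-x)(k-1) \ge k-1$, which is $\ge 2$ precisely when $k \ge 3$ (for $k = 2$ two vertices of one copy can be separated by a single maximum-level foreign vertex and the estimate fails). Once both cases are established, Theorem \ref{thm:ub} yields that the map \eqref{eq:f} is an optimal labeling and $\rn(T_{S_k})$ equals \eqref{eq:ub}; substituting $n = kn_0+1$ and $L(T_{S_k}) = k(L(T)+n_0)$ and rewriting $(n_0-1)(d_0+1) - 2L(T)+1 = \rn(T)$ gives $\rn(T_{S_k}) = k(\rn(T) + n_0(d - d_0 - 2) + d_0) + 1$, as claimed.
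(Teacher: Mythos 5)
Your proposal is correct and follows essentially the same route as the paper's proof: the same round-robin interleaving of the $k$ copies with $w$ first and the copies' weight centers (the star's leaves) last, the same two-case verification of \eqref{eq:dij} (different copies handled by $2L_{T_{S_k}}(u)\le d$, same copy reduced to \eqref{eq:dij} for $T$ plus the surplus $2-(k-1)(y-x)\le 0$, which is exactly where the paper also uses $k\ge 3$), and the same closing algebra with $L(T_{S_k})=k(L(T)+n_0)$. The only cosmetic difference is that you reverse each copy's internal order while the paper cyclically rotates it, moving $x^s_0$ to the end; both place the copy's weight center last and preserve \eqref{eq:dij}.
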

\begin{proof} We prove $\rn(T_{S_{k}}) = lb(T_{S_{k}})$ and for this purpose, it is enough to show that there exists a linear order $u_{0},u_{1},...,u_{n-1}$ of vertices of $T_{S_{k}}$ which satisfies the conditions of Theorem \ref{thm:ub}.

We denote an internal vertex of $k$-star by $w$ and leaf vertices adjacent to $w$ by $x^{s}$, $s = 1,2,\ldots,k$. It is clear that $w$ is a weight center of $T_{S_{k}}$.

Since $T \in \mathcal{T}_{lb}$, the radio number of $T$ is given by
\be\label{rn2}
\rn(T) = (n_{0}-1)(d_{0}+1)-2L(T)+1.
\ee
Moreover, by Theorem \ref{thm:ub}, let $x^{s}_{t}$, $0 \leq t \leq n_{0}-1$ be a linear order of vertices of each copy of $T$ attached to $x^{s}$, $1 \leq s \leq k$ which satisfies the following conditions (a) $x^{s}_{0}$ = $w$ and $x^{s}_{n_{0}-1} \in N(w)$, $1 \leq s \leq k$, (b) $d_{T}(x^{s}_{l},x^{s}_{m}) \geq \sum_{t=l}^{m-1}(L_{T}(x^{s}_{t})+L_{T}(x^{s}_{t+1}))-(l-m)(d_{0}+1)+(d_{0}+1)$. The radio labeling defined by \eqref{eq:f0}-\eqref{eq:f} is an optimal radio labeling whose span is the right-hand side of \eqref{rn2}.

We define a linear order $u_{0},u_{1},...,u_{n-1}$ of vertices of $T_{S_{k}}$ as follows: Let $u_{0}$ = $w$ and for all other $u_{i}, 1 \leq i \leq n-1$ we use the following Algorithm 2.

\begin{algorithm}[h]
\caption{A linear order $\vec{u} := \{u_0,u_1,\ldots,u_{n-1}\}$ of $V(T_{S_{k}})$.}
\hspace*{\algorithmicindent} \textbf{Input:} A list of linear orders $u^{s}_{0},u^{s}_{1},\ldots,u^{s}_{n_{0}-1}$ of $s^{th}$ copy of $V(T)$ and a list $x^{s}, 1 \leq s \leq k$ with $w$.
\begin{algorithmic}[1]
\State $u_{0} \leftarrow w$
\For{\texttt{$1 \leq t \leq n_{0}-1$}}
\For{\texttt{$1 \leq s \leq k$}}
\State $i \leftarrow (t-1)k+s$
\State $u_{i} \leftarrow x^{s}_{t}$
\EndFor
\EndFor
\For{\texttt{$1 \leq s \leq k$}}
\State $i \leftarrow n-k-1+s$
\State $u_{i} \leftarrow x^{s}$
\EndFor
\State \textbf{return} $\vec{u}:=\{u_0,u_1,\ldots,u_{n-1}\}$
\end{algorithmic}
\hspace*{\algorithmicindent} \textbf{Output:} A linear order $\vec{u} = \{u_0,u_1,\ldots,u_{n-1}\}$.
\end{algorithm}

Then $u_{0}$ = $w$ and $u_{n-1} \in N(w)$ and for all $1 \leq i \leq n-2$, $u_{i}$ and $u_{i+1}$  are in different branches.

\textsf{Claim:} The linear order \{$u_{0},u_{1},...,u_{n-1}$\} satisfies \eqref{eq:dij}.

Let $u_{i}$ and $u_{j}$, $0 \leq i < j \leq n-1$ be two arbitrary vertices. Note that $d \geq d_{0}+2$ and $L_{T_{S_{k}}}(v)$ = $L_{T}(v)+1$, for any $v \in V(T_{S_{k}})$. We denote the right-hand side of \eqref{eq:dij} by $S_{i,j}$ for simplicity. If $u_{i},u_{j}$ are in different branches then $d_{T_{S_{k}}}(u_{i},u_{j})$ = $d_{T}(u_{i},u_{j})+2$. Hence, we have $S_{i,j}$ = $\sum_{t=i}^{j-1}(L_{T_{S_{k}}}(u_{t})+L_{T_{S_{k}}}(u_{t+1}))-(j-i)(d+1)+(d+1) \leq \sum_{t=i}^{j-1}(L_{T}(u_{t})+L_{T}(u_{t+1})+2)-(j-i)(d_{0}+3)+(d_{0}+3) = \sum_{t=i}^{j-1}(L_{T}(u_{t})+L_{T}(u_{t+1}))-(j-i)(d_{0}+1)+(d_{0}+1)+2 \leq d_{T}(u_{i},u_{j})+2 = d_{T_{S_{k}}}(u_{i},u_{j})$. If $u_{i}$ and $u_{j}$ are in the same branch of $T_{S_{k}}$ then note that $d_{T_{S_{k}}}(u_{i},u_{j})$ = $d_{T}(u_{i},u_{j})$ and $j-i$ = $k(l-m)$, where $k \geq 3$ and $l-m \geq 1$. Let $\alpha$ = max\{$L_{T_{S_{k}}}(u_{t})$ : $i \leq t \leq j$\} then we have, $S_{i,j}$ = $\sum_{t=i}^{j-1}(L_{T_{S_{k}}}(u_{t})+L_{T_{S_{k}}}(u_{t+1}))-(j-i)(d+1)+(d+1) \leq (k-1)(l-m)(2\alpha-d)+\sum_{t=l}^{m-1}(L_{T}(u_{t})+L_{T}(u_{t+1}))-(l-m)(d_{0}+1)+(d_{0}+1)+2-(k-1)(l-m) \leq d_{T}(u_{l},u_{m})+2-(k-1)(l-m) \leq d_{T}(u_{l},u_{m}) = d_{T_{S_{k}}}(u_{i},u_{j})$.

Thus, in each case above a linear order $u_{0},u_{1},...,u_{n-1}$ satisfies the conditions of Theorem \ref{thm:ub}.  The radio number for $T_{S_{k}}$ is given by the right-hand side of \eqref{eq:ub} for which $n$ = $kn_{0}+1$, $L(T_{S_{k}})$ = $k(L(T)+n_{0})$ and using \eqref{rn2} we have
\bean
\rn(T_{S_{k}}) & = & (n-1)(d+1)-2L(T_{S_{k}})+1 \\
& = & (kn_{0})(d+1)-2k(L(T)+n_{0})+1 \\
& = & k((n_{0}-1)(d_{0}+1)-2L(T))+kn_{0}(d-d_{0}-2)+k(d_{0}+1)+1 \\
& = & k(\rn(T)-1)+kn_{0}(d-d_{0}-2)+k(d_{0}+1)+1 \\
& = & k(\rn(T)+n_{0}(d-d_{0}-2)+d_{0})+1.
\eean
\end{proof}

\begin{theorem}\label{thm:TDk} If $T \in \mathcal{T}_{lb}$ then $T_{D_{k}} \in \mathcal{T}_{lb}$, where $k \geq 2$ and
\begin{equation}\label{rn:TDk}
\rn(T_{D_{k}}) = 2k(\rn(T)+n_{0}(d-d_{0}-3)+d_{0})+d.
\end{equation}
\end{theorem}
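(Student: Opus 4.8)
The plan is to follow the template of the proofs of Theorems~\ref{thm:Twk} and~\ref{thm:TSk}. Since $W(T_{D_k})=\{w_1,w_2\}$ with $w_1,w_2$ adjacent, we have $\ve(T_{D_k})=0$; hence by Theorem~\ref{thm:ub} it suffices to produce a linear order $u_0,u_1,\ldots,u_{n-1}$ of $V(T_{D_k})$ with $\{u_0,u_{n-1}\}=\{w_1,w_2\}$ satisfying \eqref{eq:dij}, and then to evaluate the right-hand side of \eqref{eq:ub}. First I would record the structural data. There are $2k$ attached copies of $T$ ($k$ at $w_1$ and $k$ at $w_2$), so $n=2kn_0+2$; every copy vertex satisfies $L_{T_{D_k}}(u)=L_T(u)+1$, whence $L(T_{D_k})=2k(L(T)+n_0)$; and since the copies hang at level-$1$ leaves, the maximum level is $h_0+1$ with $h_0=\max_u L_T(u)$, so the diameter is realised by two deepest vertices in opposite branches and equals $d=2h_0+3$. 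In particular $d\ge d_0+3$ and $2L_{T_{D_k}}(u)\le d-1$ for every branch vertex $u$, the two inequalities that drive the whole verification.

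The construction of the order is where the new idea enters, and it differs from the $T_{S_k}$ case precisely because $\ve=0$. Writing $A_1,\ldots,A_k$ for the copies at $w_1$ and $B_1,\ldots,B_k$ for those at $w_2$, let $a^s_0,\ldots,a^s_{n_0-1}$ and $b^s_0,\ldots,b^s_{n_0-1}$ be the optimal orders of $A_s,B_s$ furnished by Theorem~\ref{thm:ub} applied to $T$ (with $a^s_0,b^s_0$ the copy weight centres). A one-line computation shows that a consecutive pair lying in different branches at the \emph{same} weight centre would violate \eqref{eq:dij}: for adjacent entries the bound is tight, and it is exactly the extra $+1$ obtained by crossing the central edge $w_1w_2$ that is needed. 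Hence consecutive vertices must sit in \emph{opposite} branches. I would therefore set $u_0=w_1$, $u_{n-1}=w_2$, fill the even positions $2,4,\ldots,n-2$ by cycling through $A_1,\ldots,A_k$ in round-robin (placing $a^s_t$ at position $2s+2kt$) and the odd positions $1,3,\ldots,n-3$ by cycling through $B_1,\ldots,B_k$. This forces every consecutive pair into opposite branches and gives a spacing of exactly $2k$ between successive vertices of any fixed copy. I would display this as Algorithm~3, in the format of Algorithms~1 and~2.

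The verification of \eqref{eq:dij} splits by the location of $u_i,u_j$, and this case analysis is the main obstacle; write $S_{i,j}$ for its right-hand side. Using $2L_{T_{D_k}}(u_t)\le d-1$ on the intermediate vertices, the opposite-branch case, the case of different branches at the same centre, and all cases involving $w_1$ or $w_2$ collapse to elementary inequalities governed by the parity of $j-i$ (opposite branches contribute $d_{T_{D_k}}(u_i,u_j)=L_{T_{D_k}}(u_i)+L_{T_{D_k}}(u_j)+1$, while same-centre pairs have no $+1$ but necessarily satisfy $j-i\ge2$). The delicate case is $u_i,u_j$ in the same copy, say $u_i=a^s_m,\ u_j=a^s_l$ with $m<l$ and $j-i=2k(l-m)$: here I would substitute the within-copy inequality $d_T(a^s_m,a^s_l)\ge\sum_{t=m}^{l-1}(L_T(a^s_t)+L_T(a^s_{t+1}))-(l-m-1)(d_0+1)$ (valid since $T\in\mathcal{T}_{lb}$), bound each of the $(2k-1)(l-m)$ foreign intermediate levels by $2L_{T_{D_k}}(u_t)\le d-1$, and use $d_0+1\le d-2$. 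The $d$-terms cancel, leaving $S_{i,j}\le d_T(a^s_m,a^s_l)-(2k-1)(l-m)+3$, so \eqref{eq:dij} holds exactly because $(2k-1)(l-m)\ge3$ whenever $k\ge2$; this is precisely where the hypothesis $k\ge2$ is used. Finally, substituting $n=2kn_0+2$, $\ve=0$ and $L(T_{D_k})=2k(L(T)+n_0)$ into \eqref{eq:ub}, and then replacing $2L(T)$ via $\rn(T)=(n_0-1)(d_0+1)-2L(T)+1$, yields \eqref{rn:TDk} after routine algebra.
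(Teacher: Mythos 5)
Your proposal is correct and follows essentially the same route as the paper: the same reduction to Theorem~\ref{thm:ub}, the same structural facts ($n=2kn_0+2$, $L(T_{D_k})=2k(L(T)+n_0)$, $d\ge d_0+3$, $2L_{T_{D_k}}(u)\le d-1$), the same $2k$-periodic round-robin order alternating between branches at $w_1$ and $w_2$, the same three-case verification with the same-copy case settled by $(2k-1)(l-m)\ge 3$, and the same closing algebra. The only (cosmetic) difference is that you place the copy weight centres at the start of each copy's subsequence, whereas the paper's Algorithm~3 defers them to the end; both placements satisfy \eqref{eq:dij}.
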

\begin{proof} We prove $\rn(T_{D_{k}}) = lb(T_{D_{k}})$ and for this purpose, it is enough to show that there exists a linear order $u_{0},u_{1},...,u_{n-1}$ of vertices of $T_{D_{k}}$ which satisfies the conditions of Theorem \ref{thm:ub}.

We denote two internal vertices of $k$-double star by $w_{1}$ and $w_{2}$ and leaf vertices adjacent to $w_{1}$ and $w_{2}$ by $x^{s}$, $s$ = 2,4,...,$2k$ and $x^{s}$, $s$ = 1,3,...,$2k-1$ respectively. It is clear that $w_{1}$ and $w_{2}$ are weight centers of $T_{D_{k}}$.

Since $T \in \mathcal{T}_{lb}$, the radio number of $T$ is given by
\be\label{rn3}
\rn(T) = (n_{0}-1)(d_{0}+1)-2L(T)+1.
\ee
Moreover, by Theorem \ref{thm:ub}, let $x^{s}_{t}$, $0 \leq t \leq n_{0}-1$ be a linear order of vertices of each copy of $T$ attached to $x^{s}$, $1 \leq s \leq 2k$ which satisfies the following conditions (a) $x^{s}_{0}$ = $w$ and $x^{s}_{n_{0}-1} \in N(w)$, $1 \leq s \leq 2k$, (b) $d_{T}(x^{s}_{l},x^{s}_{m}) \geq \sum_{t=l}^{m-1}(L_{T}(x^{s}_{t})+L_{T}(x^{s}_{t+1}))-(l-m)(d_{0}+1)+(d_{0}+1)$. The radio labeling defined by \eqref{eq:f0}-\eqref{eq:f} is an optimal radio labeling whose span is the right-hand side of \eqref{rn3}.

We define a linear order $u_{0},u_{1},...,u_{n-1}$ of vertices of $T_{D_{k}}$ as follows: Let $u_{0}$ = $w_{1}$, $u_{n-1}$ = $w_{2}$ and for all other $u_{i}, 1 \leq i \leq n-2$ we apply the following Algorithm 3.
\begin{algorithm}
\caption{A linear order $\vec{u} := \{u_0,u_1,\ldots,u_{n-1}\}$ of $V(T_{D_{k}})$.}
\hspace*{\algorithmicindent} \textbf{Input:} A list of linear orders $u^{s}_{0},u^{s}_{1},\ldots,u^{s}_{n_{0}-1}$ of $s^{th}$ copy of $V(T)$ and a list $x^{s}, 1 \leq s \leq 2k$ with $w_{1}$ and $w_{2}$.
\begin{algorithmic}[1]
\State $u_{0} \leftarrow w_{1}$
\State $u_{n-1} \leftarrow w_{2}$
\For{\texttt{$1 \leq t \leq n_{0}-1$}}
\For{\texttt{$1 \leq s \leq 2k$}}
\State $i \leftarrow (t-1)2k+s$
\State $u_{i} \leftarrow x^{s}_{t}$
\EndFor
\EndFor
\For{\texttt{$1 \leq s \leq 2k$}}
\State $i \leftarrow n-2k-1+s$
\State $u_{i} \leftarrow x^{s}$
\EndFor
\State \textbf{return} $\vec{u}:=\{u_0,u_1,\ldots,u_{n-1}\}$
\end{algorithmic}
\hspace*{\algorithmicindent} \textbf{Output:} A linear order $\vec{u} = \{u_0,u_1,\ldots,u_{n-1}\}$.
\end{algorithm}

Then $u_{0}$ = $w_{1}$ and $u_{n-1}$ = $w_{2}$ and for all $1 \leq i \leq n-2$, $u_{i}$ and $u_{i+1}$  are in opposite branches.

\textsf{Claim:} The linear order \{$u_{0},u_{1},...,u_{n-1}$\} satisfies \eqref{eq:dij}.

Let $u_{i}$ and $u_{j}$, $0 \leq i < j \leq n-1$ be two arbitrary vertices. Note that $d \geq d_{0}+3$ and $L_{T_{D_{k}}}(v)$ = $L_{T}(v)+1$ for any $v \in V(T_{D_{k}})$. We denote the right-hand side of \eqref{eq:dij} by $S_{i,j}$ for simplicity. If $u_{i}$ and $u_{j}$ are in opposite branches then $d_{T_{D_{k}}}(u_{i},u_{j}) = d_{T}(u_{i},u_{j})+3$. Hence, we have $S_{i,j}$ = $\sum_{t=i}^{j-1}(L_{T_{D_{k}}}(u_{t})+L_{T_{D_{k}}}(u_{t+1}))-(j-i)d+d+1 \leq \sum_{t=i}^{j-1}(L_{T}(u_{t})+L_{T}(u_{t+1})+2-(d_{0}+3))+d_{0}+4 = \sum_{t=i}^{j-1}(L_{T}(u_{t})+L_{T}(u_{t+1})-(d_{0}+1))+(d_{0}+1)+3 \leq d_{T}(u_{i},u_{j})+3 = d_{T_{D_{k}}}(u_{i},u_{j})$. If $u_{i}$ and $u_{j}$ are in different branches of $T_{D_{k}}$ then $d_{T_{D_{k}}}(u_{i},u_{j})$ = $L_{T_{D_{k}}}(u_{i})+L_{T_{D_{k}}}(u_{j})$ and $j-i \geq 2$. Let $\alpha = \max\{L_{T_{D_{k}}}(u_{t}) : i<t<j\}$ then we have, $S_{i,j}$ = $\sum_{t=i}^{j-1}(L_{T_{D_{k}}}(u_{t})+L_{T_{D_{k}}}(u_{t+1}))-(j-i)d+(d+1) =  L_{T_{D_{k}}}(u_{i})+L_{T_{D_{k}}}(u_{j})+2\sum_{t=i+1}^{j-1}L_{T_{D_{k}}}(u_{t})-(j-i-1)d+1 \leq L_{T_{D_{k}}}(u_{i})+L_{T_{D_{k}}}(u_{j})+(j-i-1)(2\alpha-(d-1))-(j-i-1)+1 \leq L_{T_{D_{k}}}(u_{i})+L_{T_{D_{k}}}(u_{j}) = d_{T_{D_{k}}}(u_{i},u_{j})$.
If $u_{i}$ and $u_{j}$ are in the same branch of $T_{D_{k}}$ then note that $d_{T_{D_{k}}}(u_{i},u_{j})$ = $d_{T}(u_{i},u_{j})$ and $j-i$ = $2k(l-m)$, where $k \geq 2$ and $l-m \geq 1$. Let $\alpha$ = max\{$L_{T_{D_{k}}}(u_{t})$ : $i \leq t \leq j$\} then we have, $S_{i,j}$ = $\sum_{t=i}^{j-1}(L_{T_{D_{k}}}(u_{t})+L_{T_{D_{k}}}(u_{t+1}))-(j-i)d+(d+1) \leq (2k-1)(l-m)(2\alpha-d)+\sum_{t=l}^{m-1}(L_{T}(u_{t})+L_{T}(u_{t+1}))-(l-m)(d_{0}+1)+(d_{0}+1)+3-(2k-1)(l-m) \leq d_{T}(u_{l},u_{m})+3-(2k-1)(l-m) \leq d_{T}(u_{l},u_{m}) = d_{T_{D_{k}}}(u_{i},u_{j})$.

Thus, in each case above a linear order $u_{0},u_{1},...,u_{n-1}$ satisfies the conditions of Theorem \ref{thm:ub}.  The radio number for $T_{D_{k}}$ is given by the right-hand side of \eqref{eq:ub} for which $n$ = $2kn_{0}+2$, $L(T_{D_{k}})$ = $2k(L(T)+n_{0})$ and using \eqref{rn3} we have
\bean
\rn(T_{D_{k}}) & = & (n-1)d-2L(T_{D_{k}}) \\
& = & (2kn_{0}+1)d-2(2kL(T)+2kn_{0}) \\
& = & 2k((n_{0}-1)(d_{0}+1)-2L(T))+2kn_{0}(d-d_{0}-3)+2k(d_{0}+1)+d \\
& = & 2k(\rn(T)-1)+2kn_{0}(d-d_{0}-3)+2k(d_{0}+1)+d \\
& = & 2k(\rn(T)+n_{0}(d-d_{0}-3)+d_{0})+d.
\eean
\end{proof}
In Table \ref{Table1}, a list of known lower bound trees with its radio number (which is useful for \eqref{rn:Twk}, \eqref{rn:TSk} and \eqref{rn:TDk}) is given. The reader may apply above described techniques on the lower bound trees and construct the large lower bound trees for the radio number. The procedure can be repeated on newly obtained lower bound trees again to form the large lower bound trees for the radio number.
\begin{table}[h]
\centering
\caption{A list of lower bound trees}\label{Table1}
\begin{tabular}{||l|l|l|l||}
  \hline\hline
  Sr. & Tree $T$ & $\rn(T)$ & Reference \\ \hline\hline
  1 & $P_{2k}$ & $2k(k-1)+1$ &  \cite{Liu} \\
  2 & $T_{h,m}$ & $\frac{m^{h+2}+m^{h+1}-2hm^2+(2h-3)m+1}{(m-1)^2}$ & \cite{Li} \\
  3 & $T^1$ & $(n-1)(d+1)+1-\dis\sum_{i=1}^{h}\(im_0\dis\prod_{0 < j < i}(m_j-1)\)$ & \cite{Tuza} \\
  4 & $T^2$ & $(n-1)d-4\dis\sum_{i=1}^{h}\(i\dis\prod_{j=0}^{i-1}(m_j-1)\)$ & \cite{Tuza} \\
  5 & $B(m,k)$ & $m(k+6)+1$ & \cite{Bantva2} \\
  6 & $F(m,k)$ & $\frac{(m^2+\ve)k}{2}+5m-3$ & \cite{Bantva2} \\
  7 & $C(2m,k)$ & $2(m-1)^2(k-1)+2m-1$ & \cite{Bantva2} \\
  \hline\hline
\end{tabular}
\end{table}

The readers may refer the following specific example for an illustration of the constructions described in this work and the procedure used in the proofs of Theorems \ref{thm:Twk} to \ref{thm:TDk}.

\begin{Example} In Fig. \ref{Fig1}, the lower bound tree $T_{w_2}$ is formed using lower bound trees $T_1$ and $T_2$ while $T_{S_3}$ and $T_{D_2}$ are formed by taking graph operations of the tree $T$ with $S_3$ and $D_2$.
\begin{figure}[h!]
\begin{center}
  \includegraphics[width=4.7in]{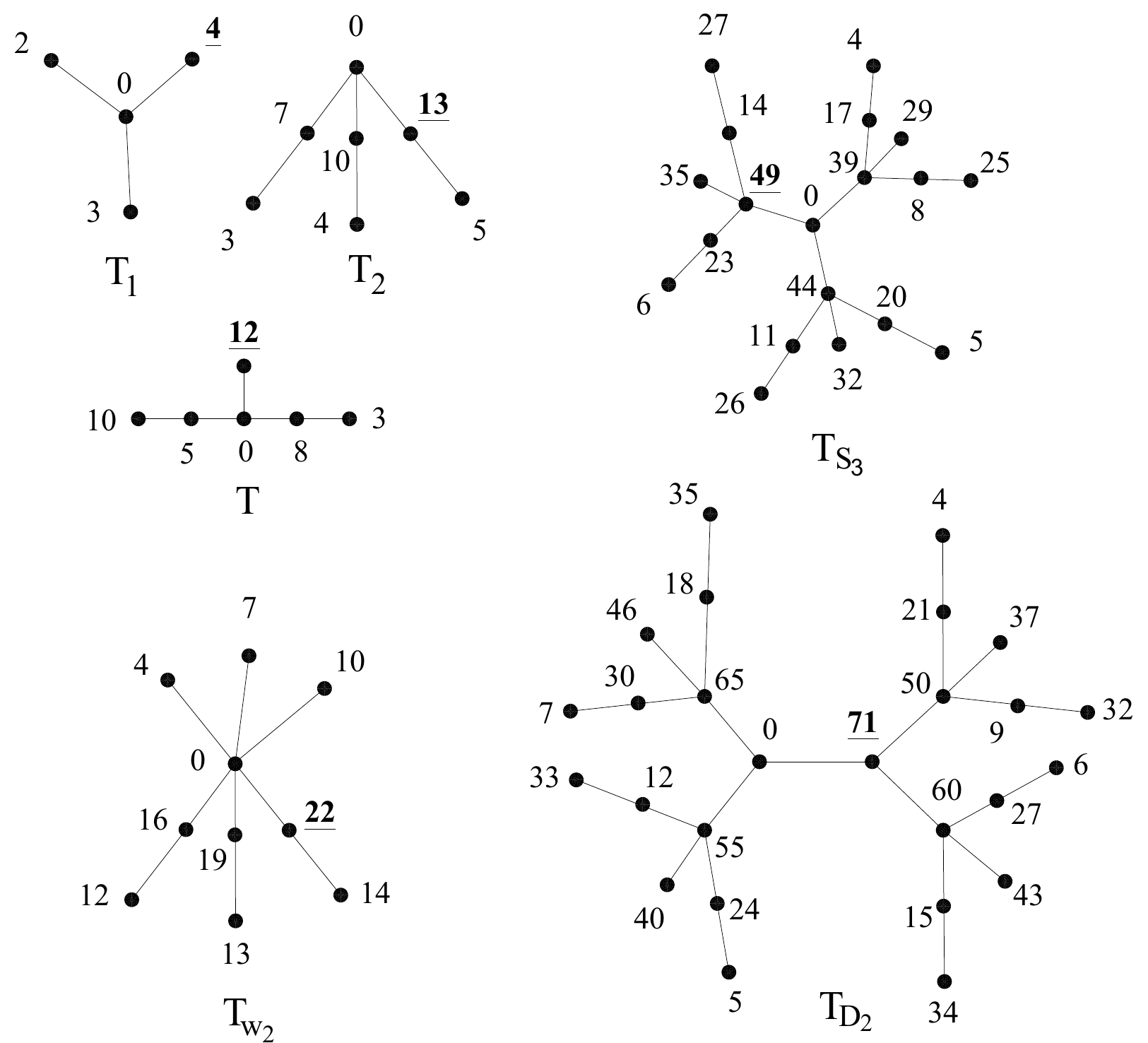}
  \caption{Optimal radio labeling for $T_{w_{2}}$, $T_{S_{3}}$ and $T_{D_{2}}$.}\label{Fig1}
\end{center}
\end{figure}
\end{Example}

\section{Concluding Remarks} We gave three techniques to find large lower bound trees which is obtained by taking graph operation on a known lower bound tree or a family of lower bound trees. The techniques can be repeated on newly obtained lower bound trees to produce more large lower bound trees. We related the radio number of newly obtained lower bound trees with given lower bound tree or a family of trees.

\section*{Acknowledgement} The research of the first author is supported by Research Promotion under Technical Education - STEM research project grant of Government of Gujarat.

\end{document}